\numberwithin{equation}{section}
\newtheorem{theorem}{Theorem}[section]
\newtheorem{lemma}[theorem]{Lemma}
\theoremstyle{definition}
\newtheorem{assumption}[theorem]{Assumption}
\newtheorem{definition}[theorem]{Definition}
\title{Boundary value problem for a global in time \\ parabolic equation
\footnote{This work was supported by Russian Science Foundation (grant No.~19-11-00069).}}
\author{V.N.~Starovoitov\\
\small Lavrentyev Institute of Hydrodynamics, Novosibirsk, Russian Federation\\
\small E-mail:\; starovoitov@hydro.nsc.ru}
\date{}
\begin{document}

\maketitle

\abstract{
The aim of this paper is to draw attention to an interesting semilinear parabolic equation that arose when describing the
chaotic dynamics of a polymer molecule in a liquid. This equation is nonlocal in time and contains a term, called the
interaction potential, that depends on the time-integral of the solution over the entire interval of solving the problem.
In fact, one needs to know the ``future'' in order to determine the coefficient in this term, i.e., the causality principle is violated.
The existence of a weak solution of the initial boundary value problem is proven. The interaction potential
satisfies fairly general conditions and can have an arbitrary growth at infinity. The uniqueness of this solution
is established with restrictions on the length of the considered time interval.
}

\bigskip\noindent
\textbf{Key words:} nonlocal in time parabolic equation, initial boundary value problem, solvability, uniqueness

\bigskip\noindent
\textbf{2010 Mathematics Subject Classification:} 35K58

\section{Introduction}\label{Star-sec1}
Let $\varOmega$ be a bounded domain in $\mathbb{R}^n$, $n\ge 2$, with a Lipschitz boundary $\partial\varOmega$.
In the space-time cylinder $\varOmega_T=\varOmega\times (0,T)$, $T\in(0,\infty)$, we consider the following differential equation:
\begin{equation}\label{Star-1.1}
\partial_t u -\Delta u +\varphi\Big(\int_0^T u(\cdot,s)\,ds\Big)\, u =0,
\end{equation}
where $u=u(x,t)$ is an unknown scalar function, $x=(x_1,\ldots,x_n)$ the vector of the spatial variables
in $\mathbb{R}^n$, $t$ the time variable in the interval $[0,T]$, $\varphi$ a scalar
function that will be specified below.  We suppose that
the following boundary and initial conditions are satisfied:
\begin{equation}\label{Star-1.2}
u(x,t)=0\quad \text{for}\quad x\in\partial\varOmega,\quad t\in [0,T],
\end{equation}
\begin{equation}\label{Star-1.3}
u(x,0)=u_0(x)\quad \text{for}\quad x\in\varOmega,
\end{equation}
where the function $u_0:\varOmega\to \mathbb{R}$ is prescribed.

An interesting feature of this problem is that equation \eqref{Star-1.1} contains a non-local in time term
that depends on the integral over the whole interval $(0,T)$ on which the problem is being solved.
For this reason, equation \eqref{Star-1.1} is called global in time in the title of the paper.
There are a lot of works that study problems with memory for parabolic equations
which includes the integral of the solution from the initial to the current time and
it is not difficult to find appropriate works on this subject.
The problems with memory differ from ours. In fact, we need to know the ``future'' in order to determine
the coefficient in equation  \eqref{Star-1.1}. It should be noted that the problem cannot be reduced
to known ones by any transformations. There are papers that study problems, where the ``future'' stands in the data
(see, e.g., \cite{Star-P,Star-Sh,Star-BB}).
The paper \cite{Star-L} is devoted to the investigation of a system of equations
that contain an integral of the solution over the entire time interval, but this nonlocality is easily eliminated and the equation is
reduced to a parabolic equation with a prescribed combination of initial and final data as in \cite {Star-Sh}.

Our problem appeared when describing the chaotic dynamics of a single polymer molecule or, as it is also called, a polymer chain
in an aqueous solution (see \cite{Star-StSt}). The time $t$ in equation \eqref{Star-1.1} is in fact the arc length parameter along the chain.
The unknown function $u=u(x,t)$ is the density of probability that the $t$-th segment of the chain is at the point $x$.
Since each segment of the chain interacts with all other segments through the surrounding fluid,
the equation contains an interaction term which includes an integral of $u$ over the entire chain.
Equation \eqref{Star-1.1} is simpler than that obtained in \cite{Star-StSt}, however,
it looks similar and also contains the term with the integral of
the solution from $0$ to $T$.

In \cite{Star-St}, the weak solvability of the problem is proven for the case where $u$ is a positive bounded function
and $\varphi$ is the so called Flory --- Huggins potential. The positiveness is a natural requirement since
$u$ is the density of probability. The Flory --- Huggins potential is a convex increasing function that tends
to infinity as its argument approaches a certain positive value. Such an equation can appear in other problems
as well. It would be interesting to investigate it with another potential $\varphi$.
In this paper, we consider the potential $\varphi$ that is, in general, not convex and not
everywhere increasing. Besides that, we do not require that the solution is positive and bounded.

Generally speaking, equation \eqref{Star-1.1} has features unusual for parabolic equations.
First of all, the causality principle is violated. The state of the system depends not only on the past but also on
the future. Besides, from mathematical point of view, the solution of a nonlinear parabolic problem is commonly
being constructed locally in time and is extended afterwards. In our case this procedure is impossible.
Finally, as a rule, the local in time uniqueness of the solution implies the global one. We cannot prove the
uniqueness without restrictions on $T$. However, it is possible that a more skilled author will be able to do this.

In the next section, we define the notion of weak solution of problem \eqref{Star-1.1}--\eqref{Star-1.3}
and formulate Theorem~\ref{Star-t2.3}, the main result of the paper, that states the weak solvability of the problem.
The proof of this result (Section~\ref{Star-sec4}) is based on the Tikhonov theorem on the
existence of a fixed point of a map $\varPsi$. The construction of this map is divided into two standard problems
which are considered in Section~\ref{Star-sec3}. The mapping $\varPsi$ must be weakly continuous.
Roughly speaking, we have to show that not only a subsequence but the whole sequence converges weakly.

In Section~\ref{Star-sec5}, we present one of possible uniqueness results.
We managed to prove the uniqueness of the weak solution of problem \eqref{Star-1.1}--\eqref{Star-1.3}
only for sufficiently small $T$. Even in the case where $\varphi$ is the Flory --- Huggins potential,
a convex increasing function, and the solution of the problem is a non-negative bounded function,
we are forced to impose a restriction on $T$. Generally speaking, this fact can be explained by physical reasons.
Recall that the original problem describes the dynamics of a polymer chain and $T$ is its length.
If the chain is too long, it can form knots that significantly affect the chaotic motion of the chain.
Nevertheless, from a mathematical point of view, such a situation cannot be considered completely satisfactory.
In forthcoming studies, we will try to get rid of the restriction on $T$ in proving the uniqueness of the solution.
Notice that the Cauchy problem for the corresponding ordinary differential equation has a unique solution
for all values of $T$.
Since we intend to establish only a local in time uniqueness result, it makes no sense to prove it under the most
general conditions on the data of the problem. We suppose that the initial value of the solution is a bounded function.
This condition is natural for the problem of the polymer chain dynamics. Besides that, we impose an additional restriction
on the potential $\varphi$.

\section{Weak statement of the problem and main result}\label{Star-sec2}
At first, we formulate conditions on the potential $\varphi$ which will be fulfilled throughout the paper.
\begin{assumption}\label{Star-t2.1}
The potential $\varphi:\mathbb{R}\to [0,+\infty)$ is a continuous non-negative function
such that $\varphi(0)=0$ and $s\mapsto \varphi(s)s$ is a non-decreasing differentiable function whose
derivative is bounded on every compact subset of $\mathbb{R}$.
\end{assumption}
This assumption admits functions $\varphi$ that are not convex and not increasing as its argument tends
to $+\infty$. Besides that, we do not impose any restrictions on the growth of $\varphi$ at infinity.
Figure~\ref{Star-pic} shows examples of possible functions $\varphi$. The functions in the figure are even
but it is not necessary.
\begin{figure}[h]
\begin{center}
\includegraphics[width=0.7\textwidth]{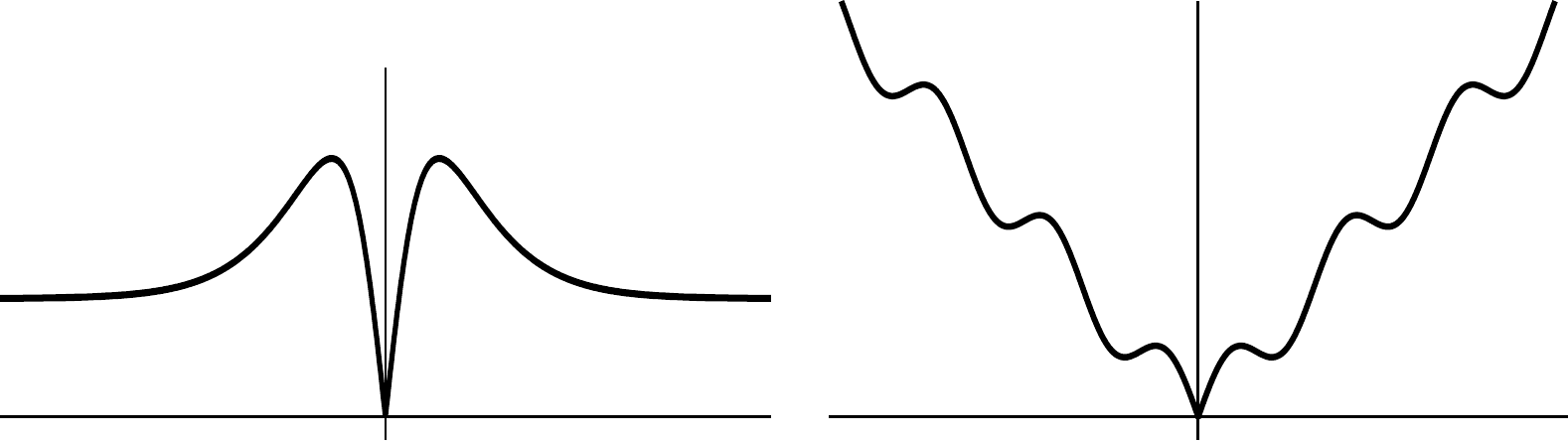}
\caption{\small Examples of the potential: $\displaystyle\varphi(s)=\frac{2|s|+\sinh|s|}{\cosh|s|}$ and
$\varphi(s)=2|s|+3\sin|s|$.}
\label{Star-pic}
\end{center}
\end{figure}

We will use the standard Lebesgue and Sobolev spaces $L^p(\varOmega)$, $H^1_0(\varOmega)$, $L^2(0,T;H^1_0(\varOmega))$
and $C(0,T; L^2(\varOmega))$ (see, e.g., \cite{Star-GGZ,Star-Br}).  As usual, $H^{-1}(\varOmega)$ is the dual space of $H^1_0(\varOmega)$
with respect to the pivot space $L^2(\varOmega)$. The norm and the inner product in $L^2(\varOmega)$ will be denoted by $\|\cdot\|$
and $(\cdot,\cdot)$, respectively.

\begin{definition}\label{Star-t2.2}
Let $\varphi$ satisfies Assumption~\ref{Star-t2.1} and $u_0\in L^2(\varOmega)$. A function
$u:\varOmega_T\to \mathbb{R}$ is said to be a weak solution of problem \eqref{Star-1.1}--\eqref{Star-1.3}, if
\begin{enumerate}
\item
$u\in L^2(0,T;H^1_0(\varOmega))$ and $\varphi(v)\,u\in L^1(\varOmega_T)$, where $v=\int_0^T u\,dt$;
\item
the following integral identity
$$
\int_0^T \int_\varOmega \big(u\,\partial_t h - \nabla u\cdot\nabla h - \varphi(v)\,u\, h\big)\,dxdt +
\int_\varOmega u_0 h_0\, dx=0
$$
holds for an arbitrary smooth in the closure of $\varOmega_T$ function $h$ such that $h(x,t)=0$
for $x\in\partial\varOmega$ and for $t=T$. Here, $h_0=h|_{t=0}$.
\end{enumerate}
\end{definition}

The main result of the paper is the following theorem.
\begin{theorem}\label{Star-t2.3}
If $u_0\in L^2(\varOmega)$, $T$ is an arbitrary positive number, and
$\varphi$ satisfies Assumption~\ref{Star-t2.1}, then problem \eqref{Star-1.1} -- \eqref{Star-1.3} has a weak solution $u\in
L^\infty(0,T;L^2(\varOmega))\cap L^2(0,T;H^1_0(\varOmega))$ such that
$$
\varphi(v)\in L^2(\varOmega),\quad \varphi(v)\, v\in L^2(\varOmega), \quad \varphi(v)\, u^2 \in L^1(\varOmega_T),
\quad u\in C(0,T;L^2(\varOmega)),
$$
where $v=\int_0^T u\,dt$.
\end{theorem}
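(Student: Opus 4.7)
Following the strategy announced in the introduction, I would realize a weak solution as a fixed point of a nonlinear map via the Tikhonov (Schauder--Tikhonov) fixed-point theorem. To each $w\in L^2(\varOmega)$ associate the unique weak solution $u=u[w]\in L^\infty(0,T;L^2(\varOmega))\cap L^2(0,T;H^1_0(\varOmega))$ of the \emph{linear} problem
\begin{equation*}
\partial_t u - \Delta u + \varphi(w)\,u = 0,\qquad u(\cdot,0)=u_0,\qquad u|_{\partial\varOmega}=0,
\end{equation*}
whose coefficient $\varphi(w)\ge 0$ is a measurable function of $x$ alone; existence and uniqueness follow from a standard Galerkin construction, which is presumably one of the two subproblems treated in Section~\ref{Star-sec3}. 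Set $\varPsi(w):=\int_0^T u[w]\,dt$; by construction, a fixed point of $\varPsi$ is a weak solution of \eqref{Star-1.1}--\eqref{Star-1.3}.

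Testing the linear equation with $u$ gives the energy identity
\begin{equation*}
\tfrac12\|u(t)\|^2 + \int_0^t\|\nabla u\|^2\,ds + \int_0^t\!\!\int_\varOmega \varphi(w)\,u^2\,dx\,ds = \tfrac12\|u_0\|^2,
\end{equation*}
so that $\|\nabla u\|_{L^2(\varOmega_T)}^2 \le \tfrac12\|u_0\|^2$; together with $\nabla\varPsi(w)=\int_0^T\nabla u\,dt$ and Cauchy--Schwarz in $t$ this yields $\|\nabla\varPsi(w)\|_{L^2(\varOmega)}\le\sqrt{T/2}\,\|u_0\|$. Hence the closed ball $K:=\{w\in H^1_0(\varOmega):\|\nabla w\|\le\sqrt{T/2}\,\|u_0\|\}$, viewed as a subset of $L^2(\varOmega)$, is convex, $L^2$-compact by Rellich's theorem, and invariant under $\varPsi$.

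The heart of the argument is then the continuity of $\varPsi\colon K\to K$ in the $L^2$-norm topology; Tikhonov's theorem will then supply a fixed point $v\in K$, and $u:=u[v]$ will be the sought-for weak solution. Given $w_n\to w$ in $L^2(\varOmega)$, I pass to a subsequence with $w_n\to w$ a.e., so that $\varphi(w_n)\to\varphi(w)$ a.e.\ by continuity of $\varphi$. The uniform energy bound on $u_n:=u[w_n]$, combined with an Aubin--Lions argument applied to the equation $\partial_t u_n=\Delta u_n-\varphi(w_n)u_n$, permits the extraction of a further subsequence with $u_n\rightharpoonup u^\star$ in $L^2(0,T;H^1_0(\varOmega))$, weakly-$*$ in $L^\infty(0,T;L^2(\varOmega))$, and $u_n\to u^\star$ strongly in $L^2(\varOmega_T)$ and a.e.\ pointwise. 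Passing to the limit in the weak formulation identifies $u^\star=u[w]$; uniqueness of the linear problem then forces the \emph{full} sequence $\varPsi(w_n)$ to converge weakly in $H^1_0(\varOmega)$ and strongly in $L^2(\varOmega)$ to $\varPsi(w)$---the ``whole-sequence'' convergence flagged by the author. Once $v$ is in hand, the remaining properties $\varphi(v)\,v\in L^2(\varOmega)$ and $\varphi(v)\in L^2(\varOmega)$ follow from the time-integrated equation $-\Delta v+\varphi(v)\,v=u_0-u(\cdot,T)$ by testing with $\varphi(v)\,v$ and exploiting the monotonicity of $s\mapsto\varphi(s)s$ from Assumption~\ref{Star-t2.1}; $u\in C(0,T;L^2(\varOmega))$ is inherited from the linear theory.

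The principal obstacle is the passage to the limit in the nonlinear coupling $\varphi(w_n)u_n$ against smooth test functions in the absence of any growth hypothesis on $\varphi$: the only available a priori control is the $L^1$-bound $\int\!\!\int\varphi(w_n)u_n^2\le\|u_0\|^2$, which ensures weak-$*$ boundedness of $\varphi(w_n)u_n$ against smooth tests (via the weak formulation itself) but does not, on its own, yield equi-integrability. I would close this step by first promoting the subsequential convergence to convergence of the quadratic energy $\int\!\!\int\varphi(w_n)u_n^2\to\int\!\!\int\varphi(w)(u^\star)^2$, exploiting the energy \emph{equality} satisfied by $u_n$ together with the strong $L^2$-convergence of $u_n(T)$ and $\nabla u_n$ extracted by Aubin--Lions; in the spirit of Br\'ezis--Lieb this upgrades the a.e.\ convergence of $\varphi(w_n)u_n^2$ to convergence in $L^1(\varOmega_T)$, from which convergence of $\int\!\!\int\varphi(w_n)u_n\,h$ for smooth test functions $h$ follows, completing the continuity loop.
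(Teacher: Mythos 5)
Your overall strategy (Tikhonov/Schauder fixed point, splitting into auxiliary linear problems, whole-sequence convergence) matches the paper in spirit, but the specific map you iterate has a genuine gap that the paper's construction is designed precisely to avoid. You take an arbitrary $w$ in a ball of $H^1_0(\varOmega)$ and use $\varphi(w)$ directly as the zero-order coefficient of the linear parabolic problem. Since Assumption~\ref{Star-t2.1} places \emph{no growth restriction} on $\varphi$ and elements of $H^1_0(\varOmega)$ are unbounded for $n\ge 2$, the function $\varphi(w)$ need not lie in any Lebesgue space (take $w$ with a logarithmic-type singularity and $\varphi$ super-exponential). The only a priori control you then have is $\int\!\!\int\varphi(w_n)u_n^2\le\tfrac12\|u_0\|^2$, and this does not make the coupling term tractable: writing $\varphi(w_n)u_n h=\sqrt{\varphi(w_n)}\cdot\sqrt{\varphi(w_n)}\,u_n\cdot h$, the factor $\sqrt{\varphi(w_n)}$ carries no integrability at all on the set where $u_n$ is small, so $\varphi(w_n)u_n$ need not even be in $L^1(\varOmega_T)$, the bound on $\partial_t u_n$ needed for Aubin--Lions is unavailable, and the limit passage in $\int\!\!\int\varphi(w_n)u_n h$ cannot be closed. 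Your proposed repair also overreaches: Aubin--Lions yields strong convergence of $u_n$ in $L^2(\varOmega_T)$, not of $\nabla u_n$ in $L^2(\varOmega_T)$ nor of $u_n(\cdot,T)$ in $L^2(\varOmega)$, so the energy-equality/Br\'ezis--Lieb step does not go through; and even if it did, $L^1$-convergence of $\varphi(w_n)u_n^2$ would still not control $\varphi(w_n)u_n$ where $|u_n|<1$.

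The missing idea is the paper's detour through the \emph{nonlinear elliptic} problem. The paper iterates on the final value $u_T$ rather than on $v$: given $w$ (a candidate for $u_T$), it first solves $-\Delta v+\varphi(v)v+(w-u_0)=0$, and Lemma~\ref{Star-t3.1} --- via testing with $\eta(v_m)=\varphi(v_m)v_m$ for truncations $v_m$ and a separate splitting over $\{|v|\ge 1\}$ --- delivers the crucial bounds $\|\varphi(v)v\|\le\|f\|$ and $\|\varphi(v)\|\le C$ \emph{before} the parabolic step. Only then is the (now $L^2$) coefficient $\zeta=\varphi(v)$ fed into the linear parabolic problem, and the whole-sequence weak continuity is obtained from Lemmas~\ref{Star-t3.2} and~\ref{Star-t3.3}. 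It is exactly this ordering that neutralizes the arbitrary growth of $\varphi$; your a posteriori use of the time-integrated equation recovers these bounds only at the fixed point, which is too late for constructing the map and proving its continuity.
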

The proof of this result will be given in Sections~\ref{Star-sec3} and \ref{Star-sec4}.

\section{Auxiliary results}\label{Star-sec3}
In this section, we consider two intermediate problems. The first problem
is elliptic and the second one is parabolic.

\subsection{Elliptic problem}\label{Star-sec3.1}
For every function $f:\varOmega\to \mathbb{R}$, define a function $v:\varOmega\to \mathbb{R}$ as a solution
of the following problem:
\begin{equation}\label{Star-3.1}
-\Delta v + \varphi(v)\,v +f=0\quad\text{in}\quad\varOmega,\quad v|_{\partial\varOmega}=0.
\end{equation}
This problem is a result of the integration of equation \eqref{Star-1.1} with respect to $t$ from $0$ to $T$.
The functions $v$ and $f$ correspond to $\int_0^T u(\cdot,t)\,dt$ and $u(\cdot,T)-u_0$, respectively.
Problem \eqref{Star-3.1} was already considered even in a more general situation with a nonlinear elliptic operator
instead of the Laplace operator (see \cite{Star-Gos,Star-Hess,Star-BrBr}). For every function $f\in H^{-1}\varOmega)$,
the existence of the solution $v\in H^1_0(\varOmega)$ such that $\varphi(v)v$ and $\varphi(v)v^2$
are in $L^1(\varOmega)$ was proven. The techniques used were different. In \cite{Star-Gos}, the Orlicz spaces were
employed under an additional assumption that the function $\varphi$ is even. In \cite{Star-Hess}, the problem was regularized
by a problem of a higher order. The order of the approximate problem depended on the dimension $n$ and
was such that its solution was a bounded function. So, there were no difficulties with the integrability of the term $\varphi(v)v$.
Notice that this is the main difficulty of the problem. In \cite{Star-BrBr}, the function $\varphi(v)v$ was
truncated by constants $\pm k$ and then $k$ tended to infinity.

In our case, the function $f$ is not only in $H^{-1}\varOmega)$ but in $L^2(\Omega)$, therefore,
we can expect more from the solution of the problem. We need in particular that $\varphi(v)\in L^2(\varOmega)$.
For brevity, we introduce the following notation: $\eta(s)=\varphi(s)s$. A function $v\in H^1_0(\varOmega)$ is said to
be a weak solution of problem \eqref{Star-3.1} if $\eta(v)\in L^1(\varOmega)$ and
\begin{equation}\label{Star-3.2}
(\nabla v,\nabla\psi)+(\eta(v),\psi)+(f,\psi)=0\quad\text{for all}\quad\psi\in H^1_0(\varOmega)\cap L^\infty(\varOmega).
\end{equation}

\begin{lemma}\label{Star-t3.1}
Let Assumption~\ref{Star-t2.1} be satisfied.
For every $f\in L^2(\varOmega)$, problem \eqref{Star-3.1} has a unique weak solution $v\in H^1_0(\varOmega)$ such that
\begin{enumerate}
\item
$\|\nabla v\|\le d(\varOmega)\,\|f\|$, where $d(\varOmega)$ is the diameter of the domain $\varOmega$;
\item
$\|\varphi(v)\,v\|\le \|f\|$;
\item
$\|\varphi(v)\|\le C$, where the constant $C$ depends on $\|f\|$ and $\varOmega$.
\end{enumerate}

\end{lemma}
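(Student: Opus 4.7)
The plan is to construct $v$ by a truncation argument. For $k\in\mathbb{N}$ let $T_k(s)=\max(-k,\min(k,s))$ and set $\eta_k(s)=\eta(T_k(s))$; since $\eta$ is continuous, non-decreasing, and has derivative bounded on compact sets with $\eta(0)=0$, each $\eta_k$ is a bounded, globally Lipschitz, non-decreasing function with $\eta_k(0)=0$ and $\eta_k(s)s\ge 0$. The regularised problem
\[
-\Delta v_k+\eta_k(v_k)+f=0
\]
is driven by a maximal monotone coercive operator on $H^1_0(\varOmega)$, so Minty--Browder (or a routine Galerkin scheme) yields a unique weak solution $v_k\in H^1_0(\varOmega)$, and Stampacchia's chain rule guarantees $\eta_k(v_k)\in H^1_0(\varOmega)\cap L^\infty(\varOmega)$, so it is admissible as a test function.

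Testing with $v_k$ and using $\eta_k(v_k)v_k\ge 0$ together with the Poincaré inequality $\|v_k\|\le d(\varOmega)\|\nabla v_k\|$ gives $\|\nabla v_k\|\le d(\varOmega)\|f\|$. Testing with $\eta_k(v_k)$ yields
\[
\int_\varOmega \eta_k'(v_k)|\nabla v_k|^2\,dx+\|\eta_k(v_k)\|^2=-(f,\eta_k(v_k));
\]
the first integral is non-negative because $\eta_k$ is non-decreasing, hence $\|\eta_k(v_k)\|\le\|f\|$ uniformly in $k$. I then extract a subsequence along which $v_k\rightharpoonup v$ weakly in $H^1_0(\varOmega)$, strongly in $L^2(\varOmega)$, and a.e. in $\varOmega$. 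Continuity of $\eta$ and the local uniform convergence $\eta_k\to\eta$ imply $\eta_k(v_k)\to\eta(v)$ a.e.; combined with the uniform $L^2$-bound this upgrades to $\eta_k(v_k)\rightharpoonup\eta(v)$ weakly in $L^2(\varOmega)$ (equi-integrability is automatic in $L^p$, $p>1$), so weak lower semicontinuity gives $\|\eta(v)\|\le\|f\|$ and the same argument transfers item~1 to $v$. Passing to the limit in the weak form of the truncated equation produces \eqref{Star-3.2}. For item~3 I split $\varOmega$ into $\{|v|\le 1\}$ and $\{|v|>1\}$: on the first set $\varphi(v)\le M:=\max_{|s|\le 1}\varphi(s)<\infty$ by continuity, and on the second $\varphi(v)=|\eta(v)|/|v|\le|\eta(v)|$, so $\|\varphi(v)\|\le M|\varOmega|^{1/2}+\|f\|$.

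Uniqueness reduces to a standard monotonicity argument. Because $\eta(v_i)\in L^2(\varOmega)$ for any weak solution $v_i$, the integral identity \eqref{Star-3.2} extends by density from $H^1_0(\varOmega)\cap L^\infty(\varOmega)$ to every $\psi\in H^1_0(\varOmega)$. Subtracting the identities for two solutions $v_1,v_2$ and choosing $\psi=v_1-v_2$ yields $\|\nabla(v_1-v_2)\|^2+(\eta(v_1)-\eta(v_2),v_1-v_2)=0$; both terms are non-negative, so $v_1=v_2$. The main obstacle is the unrestricted growth of $\eta$ at infinity, which blocks any direct monotone-operator treatment on $H^1_0(\varOmega)$ and leaves open the identification of the nonlinear weak limit after truncation. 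The remedy is the uniform $L^2$-bound on $\eta_k(v_k)$ obtained by testing with $\eta_k(v_k)$: it supplies precisely the equi-integrability needed to upgrade the a.e.\ convergence to weak $L^2$-convergence and hence to identify $\eta(v)$ as the correct limit.
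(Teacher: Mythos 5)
Your proof is correct, but it takes a genuinely different route from the paper. The paper runs a Galerkin scheme on the eigenfunctions of the Dirichlet Laplacian with the \emph{untruncated} nonlinearity $\eta$; there the only uniform bound available is $\int_\varOmega \eta(v_k)v_k\,dx\le C_0$, so the identification of the nonlinear limit goes through a hand-made equi-integrability estimate and the Vitali theorem, yielding only $\eta(v_k)\to\eta(v)$ in $L^1(\varOmega)$; the $L^2$-bound of item 2 is then recovered \emph{a posteriori} by testing the limit identity \eqref{Star-3.2} with $\eta(v_m)$, $v_m$ the truncation of the limit $v$, using $\eta(v)\eta(v_m)\ge\eta^2(v_m)$ and Fatou. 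You instead truncate the nonlinearity itself, which buys you exactly what the Galerkin scheme cannot provide: $\eta_k(v_k)$ is an admissible test function for the approximate problem (it is not an element of the Galerkin subspace $H_k$ in the paper's setting), so the uniform bound $\|\eta_k(v_k)\|\le\|f\|$ comes out at the approximate level, the limit passage reduces to the standard ``bounded in $L^2$ plus a.e.\ convergence implies weak $L^2$ convergence'' fact, and items 1 and 2 follow in one pass by weak lower semicontinuity. The price is a slightly heavier existence tool for the regularised problem (Browder--Minty rather than Brouwer in finite dimensions), and the need to check $\eta_k(v_k)\to\eta(v)$ a.e., which you do correctly since $v_k(x)$ eventually lies in $[-k,k]$ at a.e.\ point of convergence. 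Two small points deserve a remark. First, in the uniqueness step you invoke $\eta(w)\in L^2(\varOmega)$ for \emph{every} weak solution $w$, whereas the definition only gives $\eta(w)\in L^1(\varOmega)$; this is true, but it requires repeating the truncation test $\psi=\eta(w_m)$ on an arbitrary weak solution (as the paper does for item 2), or else one should test directly with the truncation $T_k(v_1-v_2)\in H^1_0(\varOmega)\cap L^\infty(\varOmega)$, which avoids the issue altogether. Second, $\eta_k=\eta\circ T_k$ need not be differentiable at $s=\pm k$, so the integral $\int_\varOmega\eta_k'(v_k)|\nabla v_k|^2\,dx$ should be read with $\eta_k'$ defined a.e.; this is harmless since only its non-negativity is used.
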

\begin{proof}
As noted above, the weak solvability of problem \eqref{Star-3.1} is already known
(see, e.g., \cite{Star-Gos,Star-Hess,Star-BrBr}). We present here a simple proof of this fact that based
on the Galerkin method. Let $\{\psi_k\}$ be the orthonormal basis in $L^2(\varOmega)$ that consists of
the eigenfunctions of the Laplace operator $(-\Delta)$ with the homogeneous boundary condition.
If we define the inner product in $H^1_0(\varOmega)$ as $(w_1,w_2)_{H^1_0(\varOmega)}=(\nabla w_1,\nabla w_2)$ for
$w_1,w_2\in H^1_0(\varOmega)$, then the set $\{\psi_k\}$ is an orthogonal basis in this space.
It is well known that $\psi_k\in L^\infty(\varOmega)$ for every $k\in \mathbb{N}$ (see, e.g., \cite[Sec.~9.8]{Star-Br}).
Let $H_k$ be the subspace of $L^2(\varOmega)$ spanned by the basis functions $\{\psi_1,\ldots,\psi_k\}$.
For every $k\in \mathbb{N}$, denote by $v_k$ a function from $H_k$ such that
\begin{equation}\label{Star-3.3}
(\nabla v_k,\nabla\psi)+(\eta(v_k),\psi)+(f,\psi)=0\quad\text{for all}\quad\psi\in H_k.
\end{equation}
By employing the Brouwer fixed point theorem, it is not difficult to prove the existence of $v_k$.
Notice that the function $\eta(v_k)$ is bounded and
$(\eta(v_k),\psi)$ is well defined. If we take $\psi=v_k$, then we easily find that
\begin{equation}\label{Star-3.4}
\|\nabla v_k\|^2 +\int_\varOmega \varphi(v_k)v_k^2\,dx\le \|f\|\,\|v_k\|,\quad k\in \mathbb{N}.
\end{equation}
The positiveness of $\varphi$ and the Poincar\'{e} inequality imply that
\begin{equation}\label{Star-3.5}
\|\nabla v_k\|\le d(\varOmega)\,\|f\|,\quad k\in \mathbb{N},
\end{equation}
where $d(\varOmega)$ is the diameter of the domain $\varOmega$.

Therefore, the sequence  $\{v_k\}$ has a subsequence which converges weakly in $H^1_0(\varOmega)$
and almost everywhere in $\varOmega$ to a function $v$. We denote this subsequence again by $\{v_k\}$.
Due to the continuity of the function $\eta$, we obtain that
\begin{equation}\label{Star-3.6}
\eta(v_k)\to\eta(v)\quad\text{almost everywhere in $\varOmega$.}
\end{equation}
Let us prove that the functions $\eta(v_k)$ are uniformly integrable. Estimates \eqref{Star-3.4} and \eqref{Star-3.5}
imply that $\int_\varOmega \eta(v_k)\, v_k\, dx \le C_0$, where the constant $C_0$ is independent of $k$ and
depends only on $\|f\|$.
For every measurable set $A\subset\varOmega$ and every positive number $M$, we introduce the set
$A_M^k=\{x\in A\,|\, |v_k(x)|\ge M\}$. Then
$$
\int_{A_M^k}|\eta(v_k)|\,dx \le \frac{1}{M}\int_{\varOmega}\eta(v_k)\, v_k\, dx\le \frac{C_0}{M}.
$$
Since the function $\eta$ is continuous, there exists a constant $\gamma(M)$ such that $|\eta(s)|\le \gamma(M)$ for $s\in[-M,M]$.
In fact, as $\eta$ is non-decreasing, $\gamma(M)=\max \{-\eta(-M),\eta(M)\}$. Thus,
$$
\int_{A\setminus A_M^k} |\eta(v_k)|\,dx \le \gamma(M)\,\mu(A),
$$
where $\mu(A)$ is the Lebesgue measure of the set $A$. These inequalities imply that
$$
\int_A|\eta(v_k)|\,dx\le \frac{C_0}{M} + \gamma(M)\,\mu(A).
$$
For arbitrary $\varepsilon >0$, we take $M=2C_0/\varepsilon$ and $\delta=\varepsilon/(2\gamma(M))$.
Then we find that
$\int_A|\eta(v_k)|\,dx< \varepsilon$ for an arbitrary measurable set $A\subset\varOmega$ such that $\mu(A)<\delta$.
Thus, the uniform integrability of $\eta(v_k)$ is proven.

This fact together with \eqref{Star-3.6} and the Vitali convergence theorem (see, e.g., \cite[Sec.~4.8.7]{Star-MP})
enable us to conclude that $\eta(v)\in L^1(\varOmega)$ and
$\eta(v_k)\to \eta(v)$ in $L^1(\varOmega)$ as $k\to\infty$.
Now we are able to pass to the limit in \eqref{Star-3.3} as $k\to\infty$. As a result, we find that
$v$ satisfies \eqref{Star-3.2}, which means that $v$ is a weak solution of problem~\eqref{Star-3.1}.
This solution is unique since the function $\eta$ is non-decreasing.

Let us prove the estimates for $v$ stated in the lemma. The first estimate is a direct consequence of
\eqref{Star-3.5}. In order to prove the second one, we introduce the truncated function
for every $m\in \mathbb{N}$:
$$
v_m(x)=\begin{cases}
m, & v(x)\ge m,
\\
v(x), & -m< v(x)< m,
\\
-m, & v(x)\le -m.
\end{cases}
$$
Let us take $\psi=\eta(v_m)$ in \eqref{Star-3.2}. Since $\eta(v)\eta(v_m)\ge \eta^2(v_m)$, we easily find that
$$
\int_\varOmega \eta'(v_m)\,|\nabla v_m|^2\,dx +\|\eta(v_m)\|^2 \le \|f\| \|\eta(v_m)\|.
$$
The sequence $\{\eta^2(v_m)\}$ converges to $\eta^2(v)$ almost everywhere in $\varOmega$, therefore, the second
estimate of the lemma follows from the Fatou lemma.

Finally, let us prove the third estimate. If $A_1=\{x\in\varOmega\;|\; |v(x)|\ge 1\}$, then
$$
\int_{A_1} \varphi^2(v)\,dx \le \int_\varOmega \varphi^2(v)\,v^2\,dx\le \|f\|^2.
$$
Since the function $\varphi$ is continuous, there exists a constant $\gamma$ such that $\varphi^2(s)\le \gamma$ for $s\in[-1,1]$. Thus,
$$
\int_{\varOmega\setminus A_1} \varphi^2(v)\,dx \le \gamma\,\mu(\varOmega).
$$
These inequalities imply the required estimate with $C^2=\|f\|^2 + \gamma\,\mu(\varOmega)$.
\end{proof}

Denote by $V$ the mapping from $L^2(\varOmega)$ into $H^1_0(\varOmega)$ such that $v=V(f)$ is the unique
weak solution of problem \eqref{Star-3.1}.
\begin{lemma}\label{Star-t3.2}
Let $\{f_k\}$ be a sequence in $L^2(\varOmega)$ that converges to $f$ weakly in this space.
If $v_k=V(f_k)$ and $v=V(f)$, then
\begin{enumerate}
\item
$v_k\to v$ in $H^1_0(\varOmega)$ as $k\to\infty$;
\item
$\varphi(v_k)\to \varphi(v)$ weakly in $L^2(\varOmega)$ as $k\to\infty$.
\end{enumerate}
\end{lemma}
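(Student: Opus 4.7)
The plan is to combine the uniform estimates of Lemma~\ref{Star-t3.1} with a subsequence argument that identifies the limit by uniqueness, and then to upgrade the weak $H^1_0$-convergence to strong convergence by exploiting the monotonicity of $\eta(s)=\varphi(s)s$.

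First, because $f_k\rightharpoonup f$ in $L^2(\varOmega)$ the sequence $\|f_k\|$ is bounded, so Lemma~\ref{Star-t3.1} furnishes uniform bounds on $\{v_k\}$ in $H^1_0(\varOmega)$ and on both $\{\eta(v_k)\}$ and $\{\varphi(v_k)\}$ in $L^2(\varOmega)$. From an arbitrary subsequence I would extract, via Rellich's theorem, a further subsequence (still denoted $\{v_k\}$) with $v_k\rightharpoonup\tilde v$ in $H^1_0(\varOmega)$, $v_k\to\tilde v$ in $L^2(\varOmega)$ and a.e.\ in $\varOmega$. Continuity of $\varphi$ and $\eta$ gives the pointwise convergences $\varphi(v_k)\to\varphi(\tilde v)$ and $\eta(v_k)\to\eta(\tilde v)$ almost everywhere, and combined with the $L^2$-bounds these upgrade to weak $L^2$-convergence. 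Passing to the limit in the integral identity \eqref{Star-3.2} satisfied by $v_k$ identifies $\tilde v$ as a weak solution of \eqref{Star-3.1} with right-hand side $f$, whence $\tilde v=v=V(f)$ by the uniqueness part of Lemma~\ref{Star-t3.1}. Since the sub-subsequential limit is always the same, the whole sequence converges in each of the above modes, which in particular establishes item~2 of the lemma.

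To obtain item~1 I would first extend the integral identity \eqref{Star-3.2} to arbitrary test functions $\psi\in H^1_0(\varOmega)$: since $\eta(v_k),\eta(v)\in L^2(\varOmega)$ the nonlinear pairing is controlled by $L^2$-$L^2$ duality, and the extension from $H^1_0\cap L^\infty$ to $H^1_0$ is carried out by truncating $\psi$ at level $m$ and letting $m\to\infty$ using dominated convergence in the $\eta$-term. Subtracting the identities for $v_k$ and $v$ and inserting $\psi=v_k-v$ yields
\[
\|\nabla(v_k-v)\|^2+(\eta(v_k)-\eta(v),\,v_k-v)=-(f_k-f,\,v_k-v).
\]
The monotonicity of $\eta$ makes the second term on the left non-negative, so
\[
\|\nabla(v_k-v)\|^2\le \|f_k-f\|\,\|v_k-v\|,
\]
and the right-hand side tends to zero because $\{f_k-f\}$ is bounded in $L^2(\varOmega)$ while $v_k\to v$ strongly in $L^2(\varOmega)$.

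The main obstacle I anticipate is the extension of the weak formulation to unbounded test functions, which is what legalises the Minty-type choice $\psi=v_k-v$; it is precisely here that the improved regularity $\varphi(v)\in L^2(\varOmega)$ from Lemma~\ref{Star-t3.1} (rather than the bare $\eta(v)\in L^1$ used in the existence proof) plays its role. The remainder is a textbook compactness-plus-monotonicity argument.
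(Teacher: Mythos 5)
Your proof is correct, but it reverses the paper's order and uses a different mechanism to obtain the strong $H^1_0$-convergence. The paper proves item~1 first and directly: testing the difference equation with $v_k-v$ and discarding the monotone term gives $\|\nabla(v_k-v)\|^2\le \|f_k-f\|_{H^{-1}(\varOmega)}\,\|v_k-v\|_{H^1_0(\varOmega)}$, and since the embedding $L^2(\varOmega)\hookrightarrow H^{-1}(\varOmega)$ is compact, weak convergence of $f_k$ in $L^2(\varOmega)$ already gives $\|f_k-f\|_{H^{-1}(\varOmega)}\to 0$; hence $\|v_k-v\|_{H^1_0(\varOmega)}\to 0$ with no subsequence extraction whatsoever, and with a quantitative Lipschitz-type bound for $V$ from $H^{-1}(\varOmega)$ to $H^1_0(\varOmega)$ as a by-product. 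You place the compactness on the $v$-side instead: you first run a subsequence-plus-uniqueness argument to get $v_k\to v$ strongly in $L^2(\varOmega)$ for the whole sequence, and then bound the right-hand side of the same energy identity by $\|f_k-f\|\,\|v_k-v\|$ with the first factor merely bounded. Both routes ultimately rest on Rellich's theorem, but the paper's is shorter and avoids the limit-identification step entirely for item~1. For item~2 the two arguments essentially coincide: a.e.\ (or in-measure) convergence of $\varphi(v_k)$ plus the uniform $L^2$-bound from Lemma~\ref{Star-t3.1}, upgraded via Vitali and density to weak $L^2$-convergence. One point in your favour: you explicitly justify inserting the unbounded test function $v_k-v$ into \eqref{Star-3.2} by truncation, exploiting $\eta(v)\in L^2(\varOmega)$, whereas the paper uses that test function without comment.
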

\begin{proof}
It is not difficult to see that $v_k-v$ is the weak solution of the following problem:
$$
-\Delta(v_k- v) + \varphi(v_k)\,v_k -\varphi(v)\,v +f_k-f=0\quad\text{in}\quad\varOmega,\quad (v_k-v)|_{\partial\varOmega}=0.
$$
Since the function $s\mapsto\eta(s)=\varphi(s)s$ is non-decreasing, we obtain that
$$
\int_\varOmega |\nabla (v_k-v)|^2\,dx \le \Big|\int_\varOmega (f_k-f)(v_k-v)\,dx\Big|
\le \|f_k-f\|_{H^{-1}(\varOmega)}\|v_k-v\|_{H^1_0(\varOmega)}.
$$
The first assertion of the lemma follows from the fact that
$\|f_k-f\|_{H^{-1}(\varOmega)}\to 0$ as $k\to\infty$.

Since $v_k\to v$ in $H^1_0(\varOmega)$ as $k\to\infty$, the sequence $\{v_k\}$ converges to $v$ in measure
and, as $\varphi$ is a continuous function, the sequence $\{\varphi(v_k)\}$ converges in measure to $\varphi(v)$.
Lemma~\ref{Star-t3.1} states that the sequence $\{\varphi(v_k)\}$ is bounded in $L^2(\varOmega)$, therefore,
due to the Vitali convergence theorem (see, e.g., \cite[Sec.~4.8.7]{Star-MP}), $\varphi(v_k)\to \varphi(v)$ in $L^p(\varOmega)$ as $k\to\infty$
for $p\in [1,2)$. Consequently,
$$
\int_\varOmega \big(\varphi(v_k)-\varphi(v)\big)\, h\, dx \to 0\quad\text{as}\quad k\to\infty
$$
for every $h\in L^\infty(\varOmega)$. The density of $L^\infty(\varOmega)$ in $L^2(\varOmega)$ implies the second
assertion of the lemma.
\end{proof}

The advantage of the lemma just proven is that we have established the convergence results not
for a subsequence but for the entire sequence $\{V(f_k)\}$. These results will be used for the proof
of the weak continuity of the mapping $\varPsi$ in the Tikhonov theorem.

\subsection{Parabolic problem}\label{Star-sec3.2}
We consider the following parabolic problem:
\begin{equation}\label{Star-3.7}
\partial_t u -\Delta u +\zeta u=0\quad\text{in}\quad\varOmega_T,\quad u|_{\partial\varOmega}=0,\quad u|_{t=0}=u_0,
\end{equation}
where $\zeta:\varOmega\to \mathbb{R}$ is an independent of $t$ non-negative function.
We suppose that $\zeta\in L^2(\varOmega)$ and $u_0\in L^2(\varOmega)$.
This problem is standard and we omit the proof of its unique weak solvability as well as various justifications
(see, e.g., \cite{Star-GGZ}). A little trouble is that the function $\zeta$ is in $L^2(\varOmega)$ only, which
can be easily overcome if we consider the solution in $L^2\big(0,T;H_0^1(\varOmega)\cap L^2(\varOmega,\zeta)\big)$,
where $L^2(\varOmega,\zeta)$ is the Hilbert space with the norm
$\|u\|_{\zeta}^2=\int_\varOmega \zeta u^2\,dx$.
The weak solution of problem \eqref{Star-3.3} satisfies the energy estimate:
\begin{equation}\label{Star-3.8}
\frac{1}{2}\, \|u(\cdot,s)\|^2 +\int_0^s \|\nabla u\|^2\,dxdt + \int_0^s \int_\varOmega \zeta\, u^2\,dxdt
\le\frac{1}{2}\, \|u_0\|^2
\end{equation}
for almost all $s\in[0,T]$. Besides that, $\partial_t u$ belongs to the space $L^2\big(0,T;(H_0^1(\varOmega)\cap L^2(\varOmega,\zeta))^*\big)$,
where $(H_0^1(\varOmega)\cap L^2(\varOmega,\zeta))^*$ is the conjugate space to $H_0^1(\varOmega)\cap L^2(\varOmega,\zeta)$.
As a consequence of this fact, we find that $u\in C(0,T;L^2(\varOmega))$.
Thus, the function $u_T=u|_{t=T}$ is well defined as an element of $L^2(\varOmega)$ and \eqref{Star-3.8} holds for all $s\in [0,T]$.

For every non-negative function $\zeta\in L^2(\varOmega)$, we denote by $U(\zeta)$ the unique weak solution
of problem \eqref{Star-3.7} and by $U_T(\zeta)$ the function $U(\zeta)|_{t=T}$.
Our goal is to investigate the dependence of $U$ and $U_T$ on $\zeta$.

\begin{lemma}\label{Star-t3.3}
Let $u_0\in L^2(\varOmega)$ and a sequence of non-negative functions $\{\zeta_k\}$ converges weakly in $L^2(\varOmega)$
to a function $\zeta$. Then $U_T(\zeta_k)\to U_T(\zeta)$ weakly in $L^2(\varOmega)$ as $k\to\infty$.
\end{lemma}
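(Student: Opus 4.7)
The plan is to let $u_k = U(\zeta_k)$, exploit the energy inequality \eqref{Star-3.8} together with the weak $L^2$ bound on $\{\zeta_k\}$ to derive uniform estimates, extract a strongly convergent subsequence by Aubin--Lions, identify the limit via uniqueness of $U(\zeta)$, and finally read off the weak $L^2(\varOmega)$ limit of the traces at $t=T$ by testing against time-independent functions. The standard subsequence-extraction argument then upgrades convergence from a subsequence to the full sequence.

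First I would note that \eqref{Star-3.8} gives uniform bounds on $\{u_k\}$ in $L^\infty(0,T;L^2(\varOmega)) \cap L^2(0,T;H^1_0(\varOmega))$ and on $\{\sqrt{\zeta_k}\,u_k\}$ in $L^2(\varOmega_T)$. Since $\zeta_k \rightharpoonup \zeta$ weakly in $L^2(\varOmega)$, the sequence $\{\zeta_k\}$ is also bounded in $L^2(\varOmega)$, and $\zeta \ge 0$ a.e. Testing the equation against $\phi \in H^1_0(\varOmega) \cap L^\infty(\varOmega)$ gives
\[
|\langle \partial_t u_k, \phi \rangle| \le \|\nabla u_k\|\,\|\nabla \phi\| + \|\zeta_k\|_{L^2}\,\|u_k\|\,\|\phi\|_{L^\infty},
\]
so $\{\partial_t u_k\}$ is bounded in $L^2(0,T; X^*)$ with $X = H^1_0(\varOmega) \cap L^\infty(\varOmega)$. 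Because $H^1_0(\varOmega)$ is compactly embedded in $L^2(\varOmega)$ and $L^2(\varOmega)$ continuously embeds into $X^*$, the Aubin--Lions lemma supplies a subsequence (still denoted $\{u_k\}$) with $u_k \to u^*$ strongly in $L^2(\varOmega_T)$, weakly in $L^2(0,T;H^1_0(\varOmega))$, and weakly-$*$ in $L^\infty(0,T;L^2(\varOmega))$.

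The next step is to pass to the limit in the weak formulation of \eqref{Star-3.7}. All linear terms converge by the above weak convergences. For the nonlinear term, for any smooth test function $h$ vanishing on $\partial\varOmega$ and at $t=T$, I would rewrite
\[
\int_{\varOmega_T} \zeta_k u_k h\,dx\,dt = \int_\varOmega \zeta_k(x)\,\Bigl(\int_0^T u_k(x,t) h(x,t)\,dt\Bigr) dx.
\]
Strong convergence $u_k \to u^*$ in $L^2(\varOmega_T)$ and boundedness of $h$ yield, via Cauchy--Schwarz, $\int_0^T u_k h\,dt \to \int_0^T u^* h\,dt$ strongly in $L^2(\varOmega)$; combined with $\zeta_k \rightharpoonup \zeta$ in $L^2(\varOmega)$ this gives the required convergence. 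Uniqueness of the weak solution of \eqref{Star-3.7} identifies $u^* = U(\zeta)$. To extract weak convergence of $u_k(\cdot,T)$, I would use time-independent test functions $h(x,t) = \phi(x)$ with $\phi \in H^1_0(\varOmega) \cap L^\infty(\varOmega)$; integrating by parts in time gives
\[
(u_k(T), \phi) = (u_0, \phi) - \int_0^T (\nabla u_k, \nabla \phi)\,dt - \int_0^T (\zeta_k u_k, \phi)\,dt,
\]
and the same arguments show the right-hand side tends to $(U(\zeta)(T), \phi) = (U_T(\zeta), \phi)$. Since $\{u_k(T)\}$ is bounded in $L^2(\varOmega)$ and $H^1_0(\varOmega) \cap L^\infty(\varOmega) \supset C_c^\infty(\varOmega)$ is dense in $L^2(\varOmega)$, we conclude $u_k(T) \rightharpoonup U_T(\zeta)$ weakly in $L^2(\varOmega)$; since the limit is independent of the subsequence, the whole sequence converges.

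The main obstacle is justifying strong $L^2(\varOmega_T)$ convergence of $\{u_k\}$: because $\zeta_k$ is only $L^2$ rather than bounded, the product $\zeta_k u_k$ is not naturally in $L^2(0,T;H^{-1}(\varOmega))$, so one cannot bound $\partial_t u_k$ there. The fix is to work in the dual of $H^1_0(\varOmega) \cap L^\infty(\varOmega)$, which still sits below $L^2(\varOmega)$ in the compactness chain and permits Aubin--Lions to be applied.
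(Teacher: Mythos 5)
Your argument is correct, and its overall skeleton matches the paper's: uniform bounds from the energy estimate \eqref{Star-3.8}, extraction of a subsequence, passage to the limit in the term $\int_{\varOmega_T}\zeta_k u_k h\,dx\,dt$ by rewriting it as $\int_\varOmega \zeta_k\bigl(\int_0^T u_k h\,dt\bigr)dx$ and pairing the weak $L^2(\varOmega)$ convergence of $\zeta_k$ against strong $L^2(\varOmega)$ convergence of the time integral, identification of the limit by uniqueness, recovery of the trace at $t=T$ from the equation, and the subsequence-uniqueness trick to upgrade to the whole sequence. Where you genuinely diverge is the compactness step, which you correctly identify as the crux. You obtain strong convergence of $u_k$ itself in $L^2(\varOmega_T)$ via Aubin--Lions, which forces you to bound $\partial_t u_k$ in the dual of $H^1_0(\varOmega)\cap L^\infty(\varOmega)$ (and, strictly speaking, to invoke Simon's non-reflexive version of the lemma, since $L^\infty$ spoils reflexivity of that dual; alternatively one replaces $L^\infty$ by $H^s$ with $s$ large). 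The paper sidesteps all of this: since $\zeta_k$ is time-independent, only the quantity $\int_0^T u_k h\,dt$ needs to converge strongly in $L^2(\varOmega)$, and the bound $\bigl\|\nabla\int_0^T u_k h\,dt\bigr\|\le C$, which follows directly from \eqref{Star-3.8}, gives this by the Rellich theorem with no control of $\partial_t u_k$ at all. Your route is heavier but yields the stronger by-product $u_k\to u^*$ in $L^2(\varOmega_T)$; the paper's is more elementary and tailored exactly to what the nonlinear term requires.
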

\begin{proof}
Notice that the lemma asserts the convergence of the entire sequence $\{U_T(\zeta_k)\}$ .
For brevity, we denote by $u_k$ and $u$ the functions $U(\zeta_k)$ and $U(\zeta)$, respectively.
Let $h:\varOmega_T\to \mathbb{R}$ be an arbitrary smooth function such that $h|_{\partial\varOmega}=0$.
As it follows from \eqref{Star-3.8},
$$
\int_0^T\int_\varOmega |\nabla u_k|^2\,dxdt\le \frac{1}{2}\, \|u_0\|^2,\quad k\in \mathbb{N}.
$$
This estimate implies that
$$
\big\|\nabla\int_0^T u_k h\,dt\big\|\le C,\quad k\in \mathbb{N},
$$
where the constant $C$ depends, of course, on $h$. Therefore, the sequence $\{u_k\}$ has a subsequence
$\{u_{k'}\}$ such that
\begin{align*}
& u_{k'}\to w\quad \text{weakly in}\quad L^2\big(0,T;H_0^1(\varOmega)),
\\
& \int_0^T u_{k'} h\,dt\to \int_0^T w h\,dt\quad\text{in}\quad L^2(\varOmega)
\end{align*}
as $k'\to\infty$, where $w$ is some function. As a consequence of the second relation, we have that
$$
\int_0^T\int_\varOmega \zeta_{k'} u_{k'} h\,dxdt \to \int_0^T\int_\varOmega \zeta w h\,dxdt
\quad\text{as}\quad k'\to\infty.
$$
Here, we have used the fact that the functions $\zeta_k$ do not depend on $t$.
The passage to the limit as $k'\to\infty$ in the weak formulation of \eqref{Star-3.3} and the uniqueness of the
solution of this problem imply that $w=U(\zeta)$.
Besides that, equation \eqref{Star-3.7} implies that
\begin{multline*}
\int_\varOmega \big(U_T(\zeta_{k'})-U_T(\zeta)\big)\, h(\cdot,T)\, dx
\\
=-\int_0^T\int_\varOmega \big(\nabla (u_{k'}-u)\cdot\nabla h + (\zeta_{k'}u_{k'}-\zeta u)\,h\big)\,dxdt
\to 0\quad\text{as}\quad k'\to\infty.
\end{multline*}
Since the set of smooth functions is dense in $L^2(\varOmega)$, we
conclude that $U_T(\zeta_{k'})\to U_T(\zeta)$ weakly in $L^2(\varOmega)$ as $k'\to\infty$.
Thus, we have proven that every subsequence of the sequence $\{U_T(\zeta_k)\}$ has a subsequence that
converges weakly to $U_T(\zeta)$ in $L^2(\varOmega)$. The uniqueness of the limit yields
the assertion of the lemma.
\end{proof}

\section{Weak solvability of the problem}\label{Star-sec4}
In order to prove the weak solvability of problem \eqref{Star-1.1}--\eqref{Star-1.3}
stated in Theorem~\ref{Star-t2.3}, we employ the Tikhonov fixed-point theorem which states that, for a reflexive separable
Banach space $X$ and a closed convex bounded set $E\subset X$, if a mapping $\varPsi:E\to E$ is
weakly sequentially continuous, then $\varPsi$ has at least one fixed point in $E$.

We take $X=L^2(\varOmega)$, $E=\{w\in L^2(\varOmega)\,|\; \|w\|\le \|u_0\|\}$ and define the mapping $\varPsi$ as follows:
for every $w\in E$, $\varPsi (w)= U_T(\varphi(v))$, where $v=V(w-u_0)$. The operators $V$ and $U_T$ were introduced
in the previous section. If $u$ is the weak solution of the original problem, then $\int_0^T u\, dt= V(u_T-u_0)$ and
$$
u_T=U_T\Big(\varphi\big(\int_0^T u\, dt\big)\Big)=\varPsi(u_T).
$$
Thus, the fixed point of the mapping $\varPsi$ is a function that corresponds to $u_T$. If we know $u_T$, we define
the function $v=V(u_T-u_0)$ and finally find the weak solution of the original problem
$u=U(\varphi(v))$.

As it follows from the properties of the operators $V$ and $U$, the mapping $\varPsi$ is well defined on $L^2(\varOmega)$.
Besides that, for every non-negative function $\zeta\in L^2(\varOmega)$, the function $U(\zeta)$ satisfies \eqref{Star-3.8} which implies
that $\|U_T(\zeta)\|\le \|u_0\|$. Thus, $\varPsi(E)\subset E$. It remains to prove the weak sequential continuity
of $\varPsi$.

Let $\{w_k\}$ be an arbitrary sequence in $E$ that converges to $w\in E$ weakly in $L^2(\varOmega)$.
We need to prove that $\varPsi(w_k)\to \varPsi(w)$ weakly in $L^2(\varOmega)$ as $k\to\infty$.
It is a simple consequence of the results obtained in Section~\ref{Star-sec3}.
Due to Lemma~\ref{Star-t3.2}, $\varphi(v_k)\to\varphi(v)$ weakly in $L^2(\varOmega)$ as
$k\to\infty$, where $v_k=V(w_k-u_0)$ and $v=V(w-u_0)$. In turn, Lemma~\ref{Star-t3.3} implies
that $\varPsi (w_k)= U_T(\varphi(v_k))\to U_T(\varphi(v))=\varPsi (w)$ weakly in $L^2(\varOmega)$
as $k\to\infty$.
Thus, the weak solvability of problem \eqref{Star-1.1}--\eqref{Star-1.3} is proven.

\section{Uniqueness of the solution}\label{Star-sec5}
In this section, we prove the following uniqueness theorem.
\begin{theorem}\label{Star-t5.1}
Let Assumption~\ref{Star-t2.1} be satisfied.
Assume that there exist constants $K$ and $M$ such that $|u_0|\le K$ almost everywhere in $\varOmega$
and $|\varphi'(s)|\le M$ for $s\in [-KT,KT]$.

If $MKT^2<2$, then the weak solution of problem \eqref{Star-1.1}--\eqref{Star-1.3}
is unique.
\end{theorem}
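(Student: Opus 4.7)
Take two weak solutions $u_1,u_2$ of \eqref{Star-1.1}--\eqref{Star-1.3} in the class of Theorem~\ref{Star-t2.3} and set $w=u_1-u_2$, $v_i=\int_0^T u_i\,dt$, $V=v_1-v_2$.

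First, I would obtain the pointwise bound $|u_i|\le K$ a.e.\ in $\varOmega_T$. Since $\zeta_i:=\varphi(v_i)\ge 0$ is time-independent and belongs to $L^2(\varOmega)$ by Lemma~\ref{Star-t3.1}, each $u_i$ coincides with the unique weak solution of the linear problem \eqref{Star-3.7} with $\zeta=\zeta_i$ and datum $u_0$. Testing \eqref{Star-3.7} with the truncations $(u_i-K)_+$ and $(-u_i-K)_+$, which lie in the natural space $L^2(0,T;H^1_0(\varOmega)\cap L^2(\varOmega,\zeta_i))$, using $\zeta_i\ge 0$ and $|u_0|\le K$, yields the standard weak maximum principle $|u_i|\le K$. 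Consequently $|v_i|\le KT$ a.e., and the mean value theorem together with $|\varphi'|\le M$ on $[-KT,KT]$ gives the Lipschitz estimate $|\varphi(v_1)-\varphi(v_2)|\le M|V|$.

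Subtracting the equations gives
$$
\partial_t w-\Delta w+\varphi(v_1)\,w+\bigl(\varphi(v_1)-\varphi(v_2)\bigr)u_2=0,\qquad w(\cdot,0)=0,
$$
and testing with $w$, dropping the non-negative term $\varphi(v_1)w^2$, estimating the last term by $MK|V||w|$ via the Lipschitz bound and $|u_2|\le K$, and applying Cauchy--Schwarz in $x$ produces
$$
\tfrac{1}{2}\|w(\cdot,t)\|^2\le MK\,\|V\|\int_0^t\|w(\cdot,s)\|\,ds,\qquad t\in[0,T].
$$
Setting $F(t)=\int_0^t\|w(\cdot,s)\|\,ds$, this reads $(F'(t))^2\le 2MK\|V\|F(t)$ with $F(0)=0$; the elementary Bihari argument (divide by $\sqrt{F}$, integrate, and use $F(0)=0$ after a standard regularisation) gives $F(t)\le \tfrac12 MK\|V\|t^2$, whence
$$
\|w(\cdot,t)\|=F'(t)\le MK\,\|V\|\,t,\qquad t\in[0,T].
$$

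The closing step, which is where the sharp threshold appears, is \emph{Minkowski's integral inequality}:
$$
\|V\|=\Bigl\|\int_0^T w(\cdot,t)\,dt\Bigr\|\le\int_0^T\|w(\cdot,t)\|\,dt\le MK\,\|V\|\int_0^T t\,dt=\tfrac{MKT^2}{2}\,\|V\|.
$$
(Applying Cauchy--Schwarz under the spatial integral instead would yield only the worse constant $T^2/\sqrt 3$ and hence the weaker threshold $MKT^2<\sqrt 3$.) Under the hypothesis $MKT^2<2$ this forces $\|V\|=0$, so $v_1=v_2$ a.e.; the equation for $w$ then reduces to the linear problem $\partial_t w-\Delta w+\varphi(v_1)w=0$ with $w(\cdot,0)=0$, whose only solution is $w\equiv 0$ by the theory recalled in Section~\ref{Star-sec3.2}. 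The step I expect to require the most care is the first one: one must justify that the truncations $(u_i\mp K)_\pm$ are admissible test functions in the weak formulation of \eqref{Star-3.7}, given that $\zeta_i$ lies only in $L^2(\varOmega)$; the remainder is a sharp combination of a Bihari-type estimate in $t$ with Minkowski's integral inequality in $x$, and the constant $2$ in $MKT^2<2$ arises exactly from $\int_0^T t\,dt=T^2/2$.
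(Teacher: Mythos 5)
Your proposal is correct and follows essentially the same route as the paper: maximum principle to get $|u_i|\le K$, the Lipschitz bound $|\varphi(v_1)-\varphi(v_2)|\le M|V|$, an energy estimate for the difference, Minkowski's integral inequality to control $\|V\|$ by $\int_0^T\|w(\cdot,s)\|\,ds$, and the same Bihari-type integral inequality producing the threshold $MKT^2<2$. The only cosmetic difference is that the paper applies Minkowski inside the energy estimate (replacing $\|v\|$ by $\xi(T)=\int_0^T\|u(\cdot,s)\|\,ds$) and concludes directly from $\xi(T)\le \tfrac{1}{2}MKT^2\xi(T)$, whereas you keep $\|V\|$ as a separate quantity and close the loop at the end.
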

\begin{proof}
Suppose that this problem has two weak solutions $u_1$ and $u_2$.
Denote by $u$ its difference $u_1-u_2$. Then
$$
\partial_t u -\Delta u + \varphi (v_1)\, u_1 - \varphi(v_2)\,u_2=0,
\quad u|_{\partial\varOmega}=u|_{t=0}=0,
$$
where $v_i(x)=\int_0^T u_i(x,t)\,dt$, $i=1,2$.
Since equation~\eqref{Star-1.1} admits the maximum principle, $|u_1|\le K$ and $|u_2|\le K$ almost everywhere in $\varOmega_T$
and $\big|\int_0^T u_i\,dt\big|\le KT$, $i=1,2$, almost everywhere in $\varOmega$.
Therefore, the equation for $u$ holds in $L^2(0,T; H^{-1}(\varOmega))$.
The multiplication of this equation by $u$ and the integration over
$\varOmega$ lead to the following equality:
$$
\frac{1}{2}\,\frac{d}{dt}\|u\|^2+\|\nabla u\|^2+\int_\varOmega \varphi(v_1)\, u^2\,dx
+\int_\varOmega \big(\varphi(v_1)-\varphi(v_2)\big)\,u_2\, u\,dx =0
$$
which implies that
$$
\frac{1}{2}\,\|u(\cdot,t)\|^2+\int_0^t\|\nabla u(\cdot,s)\|^2ds\le
MK\int_0^t\int_\varOmega |v(x)|\,|u(x,s)|\,dxds
$$
for all $t\in[0,T]$, where $v=v_1-v_2$. Since
\begin{multline*}
\int_0^t\int_\varOmega |v(x)|\,|u(x,s)|\,dxds=\int_\varOmega |v(x)|\,\int_0^t|u(x,s)|\,ds dx
\\
\le \|v\|\,\int_0^t\|u(\cdot,s)\|\,ds\le \int_0^T\|u(\cdot,s)\|\,ds\,\int_0^t\|u(\cdot,s)\|\,ds,
\end{multline*}
we obtain the following inequality:
\begin{equation}\label{Star-5.1}
\|u(\cdot,t)\|^2\le 2MK\int_0^T\|u(\cdot,s)\|\,ds\,\int_0^t\|u(\cdot,s)\|\,ds
\end{equation}
which holds for all $t\in[0,T]$. If $MKT^2<2$, this inequality implies that $u=0$.
Really, let us denote: $\xi(t)=\int_0^t\|u(\cdot,s)\|\,ds$.
Then \eqref{Star-5.1} can be rewritten as follows:
$$
(\xi'(t))^2\le 2MK\xi(T)\, \xi(t),\quad t\in (0,T].
$$
This inequality yields that $\xi(t)\le t^2MK\xi(T)/2$ for all $t\in [0,T]$ and, in particular,
$\xi(T)\le T^2MK\xi(T)/2$. If $T^2MK<2$, then the only solution of this inequality is $\xi(T)=0$
which implies that $u=0$.
\end{proof}


\begin{thebibliography}{99}

\bibitem{Star-P}
Pao,~C.~V. (1995).  Reaction diffusion equations with nonlocal boundary and nonlocal initial conditions.
\emph{Journal of Mathematical Analysis and Applications}. 195(3): 702-718.

\bibitem{Star-Sh}
Shelukhin,~V.~V. (1991). A problem with time-averaged data for nonlinear parabolic equations. \emph{Siberian Mathematical Journal}.
32(2): 309–320.

\bibitem{Star-BB}
Buhrii,~O., Buhrii,~N. (2019). Nonlocal in time problem for anisotropic parabolic equations with
variable exponents of nonlinearities. \emph{J. Math.Anal.Appl.} 473(2): 695–711.

\bibitem{Star-L}
Lyubanova,~A.~Sh. (2015). On nonlocal problems for systems of parabolic equations.
\emph{J. Math. Anal. Appl.} 421(2): 1767–1778.

\bibitem{Star-StSt}
Starovoitov,~V.~N., Starovoitova,~B.~N. (2017). Modeling the dynamics of polymer chains in water solution. Application to sensor design.
\textit{Journal of Physics: Conference series}. 894: P.n.~012088.

\bibitem{Star-St}
Starovoitov,~V.~N. (2018). Initial boundary value problem for a nonlocal in time parabolic equation.
\emph{Siberian Electronic Mathematical Reports}. 15: 1311–1319.

\bibitem{Star-GGZ}
Gajewski,~H., Gr\"oger,~K., Zacharias,~K. (1974).
\emph{Nichtlineare Operatorgleichungen und Operatordifferentialgleichungen.}
Mathematische Lehrb\"ucher und Monographien, II. Abteilung, Mathematische Monographien, B.~38.

\bibitem{Star-Br}
Br{\'e}zis,~H. (2010). \emph{Functional analysis, Sobolev spaces and partial differential equations.}
Springer Science \& Business Media.

\bibitem{Star-Gos}
Gossez,~J.~P. (1971). Op{\'e}rateurs monotones non lin{\'e}aires dans les espaces de Banach non r{\'e}flexifs.
\emph{Journal of Mathematical Analysis and Applications}. 34(2): 371-395.

\bibitem{Star-Hess}
Hess,~P. (1973). A strongly nonlinear elliptic boundary value problem.
\emph{Journal of Mathematical Analysis and Applications}. 43(1): 241-249.

\bibitem{Star-BrBr}
Br{\'e}zis,~H., Browder,~F.~E. (1978). Strongly nonlinear elliptic boundary value problems.
\emph{Annali della Scuola Normale Superiore di Pisa, Classe di Scienze}. 5(3): 587-603.

\bibitem{Star-MP}
Makarov,~B., Podkorytov,~A. (2013). \emph{Real analysis: measures, integrals and applications.}
Springer Science \& Business Media.

\end{thebibliography}
\end{document}